\documentclass[a4paper]{amsart}

\usepackage{graphics,color,pgf}
\usepackage{epsfig}
\usepackage[ansinew]{inputenc}
\usepackage[all]{xy}
\usepackage{hyperref}
\usepackage{amssymb, amsmath,amsthm, mathtools, amscd}
\usepackage{mathrsfs}
\usepackage{stmaryrd}
\usepackage{cancel} %strike out
\usepackage{tikz}
\usepackage{tikz-cd}
\usetikzlibrary{matrix}
\usepackage{epigraph}
\usepackage{float}
\usepackage{subcaption}
\usepackage{graphics,color,pgf}
\usepackage{epsfig}
\usepackage[british,english]{babel}

\newtheorem{lem}{Lemma}[section]
\newtheorem{thm}[lem]{Theorem} 
\newtheorem{prop}[lem]{Proposition}

\theoremstyle{definition}
\newtheorem{dfn}[lem]{Definition}

\theoremstyle{remark}

\newtheoremstyle{TheoremNum}
        {0.2 cm}{0.2 cm}              %%% space between body and thm
        {\itshape}                      %%% Thm body font
        {}                              %%% Indent amount (empty = no indent)
        {}                     %%% Thm head font
        {.}                             %%% Punctuation after thm head
        { }                             %%% Space after thm head
        {\thmname{\bfseries #1}\thmnote{ \bfseries #3}}%%% Thm head spec
    \theoremstyle{TheoremNum}

\renewcommand{\leq}{\leqslant}
\renewcommand{\geq}{\geqslant}
\renewcommand{\setminus}{\smallsetminus}
\title[Exponential growth rates of even Coxeter groups]{Exponential growth rates of even Coxeter groups}

\author[V\'era Bossart]{%\bfseries 
V\'era Bossart}
\address{Universit\'e de Gen\`eve}
\email{vera.bossart@unige.ch}

\author[Michelle Bucher]{%\bfseries 
Michelle Bucher} 
\address{Universit\'e de Gen\`eve}
\email{michelle.bucher@unige.ch}

\thanks{The second author is supported by the Swiss National Science Foundation. \\
} %% optional
\begin{document}

\begin{abstract} Let $W$ be an even Coxeter group. We prove that among all Coxeter systems generating $W$ the unique even Coxeter system realizes the minimal exponential growth. Our proof relies on comparing the exponential growth rates in the explicit algorithm of Mihalik \cite{Mihalik} which from any Coxeter system of an even Coxeter group eventually produces the unique even one. The main new ingredient is that blow downs along pseudo-transpositions do not increase the exponential growth rate. 

 \end{abstract}

\maketitle

\section{Introduction}

Let $G$ be a finitely generated group. Recall that the \emph{exponential growth rate} of $G$ with respect to a finite generating set $S$ is defined as 
$$\omega(G,S)=\limsup_n \sqrt[n]{|\{ g \in G\mid \ell_S(g)=n \}|},$$
where $\ell_S:G\rightarrow \mathbb{N}$ is the length with respect to $S$,
$$\ell_S(g)=\min \{n \in \mathbb{N}\mid g=s_1\cdot \ldots \cdot s_n, \ s_i\in S\cup S^{-1}\}.$$
The exponential growth rate  clearly depends on the generating set $S$. In order to obtain a group invariant, it is natural to define the \emph{uniform exponential growth rate} as
$$\omega(G):=\inf_{S}\omega(G,S),$$
where the infimum is taken over all finite sets $S$ generating $G$. 

There are very few groups of exponential growth (i.e. such that $\omega(G,S)>1$ for one and hence for all generating sets $S$) for which one can compute  the explicit value of  $\omega(G)$. The very first example is by Gromov for the free group $F_k$ on $k$ generators, where one has $\omega(F_k)=2k-1$ \cite[Example 5.13]{Gromov}. Most famously, Wilson exhibited a family of groups of exponential growth $G$ for which $\omega(G)=1$ \cite{Wilson}, hereby giving a negative answer to a question of Gromov. Further examples where the value of $\omega(G)$ has been computed (and is $>1$) include particular free products, Baumslag-Solitar groups or lamplighter groups \cite{Bucher-Talambutsa-IJM,Bucher-Talambutsa-GGD,Mann,Talambutsa}. Astonishingly, for closed surfaces $\Sigma$ of higher genus, $\omega(\pi_1(\Sigma))$ is still unknown. 

In this paper, we will simplify the studied group invariant by restricting to Coxeter groups $W$ and considering 
$$\omega_{\mathrm{Cox}}(W):=\inf_{\mathrm{Cox}\ S}\omega(W,S),$$
where the infimum is now only taken over all finite Coxeter systems $S$ generating $W$. We will see examples of Coxeter groups admitting non conjugated Coxeter systems below.

Recall that a Coxeter group $W$ is said to be \emph{even} if there exists a Coxeter system $(W,S_\mathrm{even})$ such that all the labels of the corresponding Coxeter diagram are even or $\infty$. A remarkable result of Bahls \cite{Bahls} shows that if such an even Coxeter system $S_\mathrm{even}$ exists, it is unique. Our main result is:

\begin{thm}\label{thm even} Let $W$ be an even Coxeter group. Then
$$\omega_{\mathrm{Cox}}(W)=\omega(W,S_{\mathrm{even}}),$$
where $S_{\mathrm{even}}$ is the (unique) even Coxeter system for $W$.
\end{thm}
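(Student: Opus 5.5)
The plan is to follow the strategy announced in the abstract. We use Mihalik's algorithm \cite{Mihalik}: starting from an arbitrary finite Coxeter system $S$ of the even Coxeter group $W$, it produces a finite sequence of Coxeter systems
$$S=S_0,\ S_1,\ \dots,\ S_k=S_{\mathrm{even}},$$
where each $S_{i+1}$ is obtained from $S_i$ by one elementary move. The moves are diagram twists (in the sense of Brady--McCammond--M\"uhlherr--Neumann) and blow downs along pseudo-transpositions. By Bahls' uniqueness theorem the terminal system is the even one. It therefore suffices to show that
$$\omega(W,S_{i+1})\leq \omega(W,S_i)\quad\text{for each move.}$$
Chaining these inequalities gives $\omega(W,S_{\mathrm{even}})\leq\omega(W,S)$ for every Coxeter system $S$. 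Since $S_{\mathrm{even}}$ is itself a Coxeter system, the infimum $\omega_{\mathrm{Cox}}(W)$ is attained there.

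For twists, I would show the growth series is unchanged. The growth series of a Coxeter system is determined by the poset of spherical subsets and their Poincar\'e polynomials, through Steinberg's formula
$$\frac{1}{W_S(t^{-1})}=\sum_{T\subseteq S,\ W_T\text{ finite}}\frac{(-1)^{|T|}}{W_T(t)}.$$
A twist preserves the multiset of labels. It also gives a bijection between spherical subsets that preserves the isomorphism types of the special subgroups. Hence the growth series, and so the growth rate, is unchanged.

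The main obstacle is the blow down along a pseudo-transposition. Here, roughly, a generator $a$ together with a dihedral subgroup $\langle b,c\rangle$ of order $2m$ (with $m$ odd) is replaced by generators in which $a$ is absorbed or conjugated. The number of generators drops, and an odd label becomes an even label $2m$. I would again use Steinberg's formula and compare the two rational functions $1/W_{S_i}(t^{-1})$ and $1/W_{S_{i+1}}(t^{-1})$. The aim is to show that the reciprocal of the radius of convergence does not increase. Concretely, the growth rate is the reciprocal of the smallest positive root of the denominator $p_S(t)$, which is read off from $\sum_T (-1)^{|T|}/W_T(t)$ on $(0,1)$. So I would try to prove that
$$p_{S_{i+1}}(t)\geq p_{S_i}(t)\quad\text{on }(0,1),$$
after normalisation so that both are positive at $0$. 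This forces the first zero of $p_{S_{i+1}}$ to occur no earlier.

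To organise this comparison, I would split the spherical subsets into three classes: those avoiding the modified generators, which contribute identically on both sides; those containing the dihedral pair; and those mixing $a$ with neighbours. I would then compute the difference term by term, using explicit Poincar\'e polynomials $[2]$ and $[2][m]$, where $[n]=1+\dots+t^{n-1}$. If a direct inequality fails, the fallback is a combinatorial argument. The idea would be an explicit length-nonincreasing injection from spheres of $S_{i+1}$ into spheres of $S_i$ up to bounded distortion, i.e. a word map $\ell_{S_i}(g)\leq \ell_{S_{i+1}}(g)$ that is injective on geodesics. That would also suffice, since $\omega$ of a group can only decrease when lengths do not decrease.
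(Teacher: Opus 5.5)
\textbf{The blow-down inequality, which is the whole new content, is never proved.} Your architecture is the paper's: run Mihalik's algorithm, note that a diagram twist leaves the spherical subsets and their types unchanged (so Steinberg's formula gives the same growth series), and show that a blow down along a pseudo-transposition cannot increase $\omega$. The twisting step is handled as in the paper. But for the blow down (Theorem~\ref{replacement decrease}) you only describe what you \emph{would try} to prove, and add a fallback in case it fails.

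Write, as in the paper, $S$ for the blown-down system containing $s_1$, and $T=S\setminus\{s_1\}\cup\{s_2,r\}$. Three ingredients are missing from your sketch.

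(i) All spherical subsets meeting $S_\infty=\{s\mid m_S(s,s_1)=\infty\}$ must cancel. This requires $m_T(s,s_2)=m_T(s,r)=\infty$ for $s\in S_\infty$ (Proposition~\ref{prop labels}). Otherwise $T$ could contain spherical sets such as $\{r,s\}$ with no counterpart in $S$.

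(ii) The surviving terms are indexed by spherical $X\subseteq S_2$ and carry signs $(-1)^{|X|}$, so a term-by-term comparison cannot give a sign. You have to recognise them as Steinberg's formula for the possibly infinite parabolic subgroups on $\{s_0,s_1\}\amalg S_2$ and $\{s_0,s_2,r\}\amalg S_2$. Then use that $S_2$ commutes with $s_0,s_1,s_2,r$, and $r$ with $s_0,s_2$, to get
\[
\frac{1}{W_S(x)}-\frac{1}{W_T(x)}=\frac{1}{W_{S_2}(x)}\left(\frac{1}{(1+x)(1+x+\cdots+x^{4k+1})}-\frac{1}{(1+x)^2(1+x+\cdots+x^{2k})}\right).
\]
Here $0<W_{S_2}(x)<\infty$ for $x\in(0,R)$, with $R=\min(R_S,R_T)$, because $\langle S_2\rangle$ is a standard parabolic subgroup in both systems, so its growth series is dominated coefficientwise by $W_S$ and $W_T$.

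(iii) The bracket has numerator $x(1-x^{2k+1})(1+x+\cdots+x^{2k-1})>0$. Hence $W_S<W_T$ on $(0,R)$, and Pringsheim's theorem gives $R_S\geq R_T$, i.e.\ $\omega(W,S)\leq\omega(W,T)$.

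\textbf{Other parts of the sketch also need repair.}

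The sum $\sum_X(-1)^{|X|}/W_X(t)$ equals $1/W_S(t^{-1})$, not $1/W_S(t)$. It must be evaluated at $t=x^{-1}$, not read off on $(0,1)$. Comparing polynomial ``denominators'' of two different rational functions also needs a common normalisation. The natural object is $1/W_S(x)$ itself, and for it the inequality can only be claimed on $(0,R)$. For example, if $S_2$ spans $C_2*C_2*C_2$, then $1/W_{S_2}(x)=(1-2x)/(1+x)<0$ on $(1/2,1)$, so the difference changes sign there.

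You also take for granted that Mihalik's reduction step is literally a blow down of exactly this shape. The paper checks this using Mihalik's Lemma~4 and Proposition~5. These provide $r$ with $m(r,s_0)=m(r,s_2)=2$, $m(r,s)=2$ for $s\in S_2$, and $m(r,s)=\infty$ for $s\in S_\infty$, which is exactly what (i) and (ii) need.

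Finally, your fallback cannot work as stated, because neither length function dominates the other. We have $\ell_T(s_1)=\ell_T(r(s_2s_0)^ks_2)=2k+2>1=\ell_S(s_1)$, while $\ell_S(r)=4k+2>1=\ell_T(r)$. A comparison allowing multiplicative distortion only controls $\omega$ up to a power.
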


%Note that $S_{\mathrm{even}}$ happens to be the Coxeter system of minimal cardinality. 

Our proof relies on an algorithm of Mihalik \cite{Mihalik}, which starting from any non even Coxeter system $(W,S)$ either fails in which case the Coxeter group is not even, or produces a Coxeter system $(W,S')$ with $|S'|=|S|-1$. If the Coxeter group $W$ is even the algorithm will eventually produce the unique even Coxeter system. The procedure, which we will detail in Section \ref{algo} consists of a diagram twisting which has no effect on the growth rate, followed by so called \emph{blow downs along pseudo-transpositions}. Our main contribution is, very loosely formulated: 
\begin{thm} \label{thm main blow down} Blow downs along pseudo-transpositions do not increase the exponential growth rate. 
\end{thm}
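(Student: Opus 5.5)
The plan is to compare the two growth rates through the rational growth series, using Steinberg's formula. For a Coxeter system $(W,S)$ with growth series $W_S(t)=\sum_{g\in W} t^{\ell_S(g)}$, we have
$$\frac{1}{W_S(t^{-1})}=\sum_{T\subseteq S,\ W_T \text{ finite}} \frac{(-1)^{|T|}}{W_T(t)}.$$
Since the coefficients of $W_S$ are nonnegative, $\omega(W,S)=1/\rho_S$, where $\rho_S$ is the radius of convergence of $W_S$. This $\rho_S$ is the smallest positive zero of the reciprocal rational function $1/W_S$, which equals $1$ at $t=0$. So it suffices to show that the first positive zero of $1/W_{S'}$ is at least the first positive zero of $1/W_S$, i.e. $\rho_{S'}\geq \rho_S$. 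Here $S'$ is obtained from $S$ by the blow down.

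First I will recall Mihalik's local description of a blow down along a pseudo-transposition. There is a distinguished generator $z\in S$ which, in the new system $S'$, becomes redundant: it is a product $(a\,b')^{n}$ of new generators with $n$ odd. The model case is $\{a,b,z\}$ generating $D_n\times \mathbb Z/2\cong D_{2n}$, with $\{a,b,z\}$ replaced by $\{a,b'=zb\}$ and $m(a,b')=2n$. All other generators are kept, possibly conjugated, and their labels with the new generators are determined by Mihalik's conditions. This gives an explicit bijection between the spherical subsets of $S'$ and those spherical subsets of $S$ that either avoid $z$ or contain it in a prescribed pattern. The spherical subsets avoiding the special vertices contribute identical terms to both formulas. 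Hence the difference $D(t)=1/W_{S'}(t)-1/W_S(t)$ is a finite signed sum localized at the pseudo-transposition.

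Second, I will factor $D(t)$. I group the spherical subsets by their intersection $U$ with the "link" of the special vertices, i.e. the generators commuting with or adjacent to the pseudo-transposition in the allowed way. Each spherical $T$ then decomposes as a product $W_T=W_{T_0}\cdot W_{T_1}$, where $T_0$ lies in the local part and $T_1$ in the link. This should give
$$D(t)=\sum_{U}\frac{(-1)^{|U|}}{W_U(t)}\,\bigl(\phi'(t)-\phi(t)\bigr),$$
where $\phi,\phi'$ are explicit local terms built from Poincaré polynomials $[k]_t=1+t+\dots+t^{k-1}$. In the model case, for example, $\phi$ involves $1/((1+t)^2[n])$ while $\phi'$ involves $1/([2][2n])$ together with the single-vertex terms. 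The key inequality I aim for is that $D(t)\geq 0$ on $(0,\rho_S]$. Then $1/W_{S'}\geq 1/W_S>0$ on $(0,\rho_S)$, which gives $\rho_{S'}\geq\rho_S$.

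The main obstacle is this sign control. The local differences $\phi'-\phi$ have a definite sign by a direct polynomial computation using $n$ odd. However, the link sum $\sum_U (-1)^{|U|}/W_U$ is an alternating sum and has no obvious sign in general. To handle it, I will try to recognize it as $1/W_L(t)$ for the Coxeter system $L$ given by the link. As a reciprocal of a growth series, this is positive on $(0,\rho_L)$, and $\rho_L\geq \rho_S$ because $L$ generates a standard parabolic subgroup of $W$, whose length function is the restriction of $\ell_S$. If the link is not a single parabolic but several, I will instead use inclusion--exclusion to write $D$ as a positive combination of such reciprocal growth series of parabolic subgroups, each positive on $(0,\rho_S)$.
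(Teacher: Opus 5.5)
Your strategy is the paper's: Steinberg's formula, cancellation of all terms away from the pseudo-transposition, factoring out the reciprocal growth series of the commuting parabolic, positivity of that factor because it is a parabolic subgroup, and a dihedral computation. As written, however, the sign step fails because you evaluate in the wrong variable. Steinberg's formula says that the sum over spherical $U\subseteq L$ of $(-1)^{|U|}/W_U(t)$ equals $1/W_L(t^{-1})$, not $1/W_L(t)$. On $(0,\rho_S)$ this has no sign in general. For instance, if $L$ consists of three involutions with all labels $\infty$ (e.g.\ $W=D_{4n}\times(C_2*C_2*C_2)$), it equals $(t-2)/(t+1)<0$. The local terms have the same problem. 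With $[m]=1+t+\dots+t^{m-1}$, the local alternating sums including the single-vertex terms are $t^{2n}/((1+t)[2n])$ and $t^{n+1}/((1+t)^2[n])$. Their difference is $t^{n+1}(t^{n-1}-1)/((1+t)^2[n](1+t^n))<0$ on $(0,1)$, which is the opposite of what you need. The remedy, as in the paper, has three steps. First write the identity for $1/W_{S'}(t^{-1})-1/W_S(t^{-1})$. Then recognize both the link sum and the local sums as reciprocals of Poincar\'e series via Steinberg. Only then substitute $t\mapsto t^{-1}$. This gives
\[
\frac{1}{W_{S'}(t)}-\frac{1}{W_S(t)}=\frac{1}{W_L(t)}\Bigl(\frac{1}{(1+t)[2n]}-\frac{1}{(1+t)^2[n]}\Bigr)=\frac{t(1-t^{n-1})}{W_L(t)\,(1+t)^2\,[n]\,(1+t^n)},
\]
with no single-vertex terms, and this is positive on $(0,\rho_S)$. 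The parity of $n$ plays no role in this inequality; it is only needed for the group-theoretic construction.

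The localization also has to be pinned down for your product decomposition $W_{T_0}\cdot W_{T_1}$ to hold. In the paper's notation ($s_0,s_1$ in the blown-down system, $s_0,s_2,r$ in the other), the link must be exactly $S_2=\{s\mid m(s,s_1)=2\}$. By condition (3) of Definition \ref{def pseudotransp}, its elements commute with $s_0,s_1,s_2$ and $r$. Generators that are merely ``adjacent'' will not do: an $s\in S_\infty$ may carry an arbitrary label with $s_0$, which destroys the product structure. What makes every spherical subset meeting $S_\infty$ cancel, including those containing $s_0$, is that $m(s,s_1)=m(s,s_2)=m(s,r)=\infty$ for all $s\in S_\infty$. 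This matching of $\infty$ labels is Proposition \ref{prop labels}, and it needs an actual argument; it cannot simply be deferred to ``Mihalik's conditions''. Once it is in place, the link is a single standard parabolic $W_{S_2}$. Your inclusion--exclusion fallback is then unnecessary, and in any case it would generally produce a signed rather than a positive combination.
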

The precise statement of  Theorem \ref{thm main blow down} is given in Theorem \ref{replacement decrease}. 

Comparison of exponential growth rates in a similar spirit were established by Terragni in \cite[Theorem A]{Terragni}, where the author shows that the growth rate is monotonous with respect to a natural partial order on Coxeter systems (on typically different Coxeter groups). This does however not apply to our setting as the two Coxeter systems we consider are unrelated by this order.

Among the (few) examples of groups for which the growth rate is known one finds the Coxeter groups $C_2 *(C_2\times C_2)$ and $\mathrm{PGL}(2,\mathbb{Z})$ \cite{Bucher-Talambutsa-IJM}, where $C_2$ denotes the cyclic group of order $2$. In those examples, it turns out that
$$\omega(G)=\omega_{\mathrm{Cox}}(W)$$ 
is realized on the generating (unique) Coxeter systems. The inequality $\omega_{\mathrm{Cox}}(W)\geq \omega(W)$ can however be strict as we will see on the example of the Coxeter group $C_2*\mathrm{Sym}(5)$ (see example \ref{Sym5}).

\subsection*{Acknowledgements:} We are grateful to Pierre de la Harpe for several comments that improved the exposition of this article, to Laurent Bartholdi and Corentin Bodart for many interesting discussions during the preparation of this paper, and  to Yves Cornulier for suggesting to look at (counter-) examples built on symmetric groups.

The results presented in this paper are parts of the Master thesis of the first author under the supervision of the second. 

\section{Background on Coxeter groups}

We will use standard notation as for example in  \cite{Humphreys} which we refer to for further details. 

A Coxeter system $(W,S)$ is defined by its Coxeter matrix $(m_S(s,t))_{s,t\in S}$, where $m_S(s,t)$ is the order of $st$ if $st$ is of finite order and $m_S(s,t)=\infty$ otherwise. In particular, $m_S$ is symmetric, $m_S(s,s)=1$ and $m_S(s,t)\geq 2$ whenever $s\neq t$. While $W$ can either be finite or infinite, we will only consider finite generating sets $S$. 

The Coxeter matrix can be encoded by a Coxeter diagram, which is a labelled graph with vertices $S$ and edges between $s\neq t\in S$ labelled by $m_S(s,t)$. When depicting our Coxeter diagrams we adopt the convention of omitting the edges labelled with a $2$. Note however that in the body of some proofs (for the Example \ref{Sym5} or in the proof of Theorem \ref{thm even} in Section \ref{algo}) it will be convenient to consider the labelled graph keeping the edges labelled with $2$ and forgetting the edges with $\infty$ labels. 

For any Coxeter system $(W,S)$, define $a_n$, for any $n\in \mathbb{N}$, to be the number of elements $w\in W$ of length $n$,  where the length $\ell_S(w)$ of $w\in W$ is as defined in the introduction. The Poincar\'e serie of $(W,S)$ is the formal serie
$$W(x)=\sum_{n=0}^{\infty}a_nx^n.$$
We will make crucial use of Steinberg's formula: 

\begin{thm}[Steinberg's Formula] \cite[Proposition 8]{Steinberg}\label{Steinberg}
Let $(W,S)$ be a Coxeter system. Then
$$\frac{1}{W_S(x^{-1})}=\sum_{U\subset \mathcal{S}^S_{\mathrm{fin}}} \frac{(-1)^{|U|}}{W_U(x)},$$
where the set $\mathcal{S}^S_{\mathrm{fin}}$ consists of all subsets $U\subset S$ such that the corresponding Coxeter group generated by $U$ is finite. 
\end{thm}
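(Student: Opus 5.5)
The statement to prove is Steinberg's formula, Theorem \ref{Steinberg}. I would prove it by induction on $|S|$, using parabolic subgroups and minimal coset representatives. The base case $S=\emptyset$ is trivial.

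\textbf{Step 1 (coset factorisation).} For $U\subset S$, let $W^U$ be the set of minimal length representatives of the cosets $wW_U$, and let $W^U(x)=\sum_{w\in W^U}x^{\ell(w)}$. The standard fact that every $w$ factors uniquely as $w=w^Uw_U$ with $\ell(w)=\ell(w^U)+\ell(w_U)$ gives $W(x)=W^U(x)W_U(x)$. Moreover $w\in W^U$ if and only if $U\cap D(w)=\emptyset$, where $D(w)=\{s\in S:\ell(ws)<\ell(w)\}$ is the right descent set. Hence
$$\sum_{U\subset S}(-1)^{|U|}\frac{W(x)}{W_U(x)}=\sum_{w\in W}x^{\ell(w)}\sum_{U\subset S\setminus D(w)}(-1)^{|U|}.$$
The inner sum vanishes unless $D(w)=S$. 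An element with $D(w)=S$ exists only when $W$ is finite, and then it is the longest element $w_0$, of length $N$. This yields two identities:
\begin{itemize}
\item if $W$ is finite, $\sum_{U\subset S}(-1)^{|U|}/W_U(x)=x^N/W(x)$;
\item if $W$ is infinite, $\sum_{U\subset S}(-1)^{|U|}/W_U(x)=0$.
\end{itemize}
These are identities of formal power series in which every $W_U(x)$ has constant term $1$, so the quotients make sense. Inductively they also show that each $W(x)$ is a rational function, which justifies the substitution $x\mapsto x^{-1}$.

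\textbf{Step 2 (finite case).} If $W$ is finite, every $U\subset S$ lies in $\mathcal S^S_{\mathrm{fin}}$. Multiplication by $w_0$ is a bijection with $\ell(w_0w)=N-\ell(w)$, so $W(x)$ is palindromic: $x^NW(x^{-1})=W(x)$. Therefore $x^N/W(x)=1/W(x^{-1})$, and the finite identity is exactly the claim.

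\textbf{Step 3 (infinite case).} If $W$ is infinite, the vanishing identity isolates the term $U=S$:
$$\frac{(-1)^{|S|}}{W(x)}=-\sum_{U\subsetneq S}\frac{(-1)^{|U|}}{W_U(x)}.$$
Substitute $x\mapsto x^{-1}$. Each $(W_U,U)$ with $U\subsetneq S$ is a Coxeter system with fewer generators, so the induction hypothesis gives
$$\frac{1}{W_U(x^{-1})}=\sum_{V\subset U,\ V\in\mathcal S^U_{\mathrm{fin}}}\frac{(-1)^{|V|}}{W_V(x)}.$$
Since $V\subset U$, the finite subsets of $U$ are exactly the $V\in\mathcal S^S_{\mathrm{fin}}$ contained in $U$. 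Exchanging the order of summation, the coefficient of $(-1)^{|V|}/W_V(x)$ for a fixed $V\in\mathcal S^S_{\mathrm{fin}}$ is $-(-1)^{|S|}\sum_{V\subset U\subsetneq S}(-1)^{|U|}$.

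Because $W$ is infinite, $V\neq S$, so the full alternating sum $\sum_{V\subset U\subseteq S}(-1)^{|U|}$ vanishes. The sum over proper $U$ therefore equals $-(-1)^{|S|}$, and the coefficient is $1$, which is Steinberg's formula.

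\textbf{Main obstacle.} The delicate point is Step 1: the descent-set characterisation of $W^U$, and the fact that $D(w)=S$ forces $W$ to be finite with $w=w_0$. This is standard Coxeter theory (\cite{Humphreys}, Sections 1.10 and 5.12), so I would cite it there rather than reprove it.
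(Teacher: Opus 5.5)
The paper does not prove this statement; it quotes it from Steinberg and uses it as a black box. Your argument is correct, and it is essentially the standard textbook proof (Humphreys, \S 1.11 and \S 5.12).

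The steps all check out:
\begin{itemize}
\item The coset factorisation $W(x)=W^U(x)W_U(x)$, together with the descent-set description of $W^U$, gives the alternating-sum identity. Its value is $x^N/W(x)$ when $W$ is finite and $0$ when $W$ is infinite.
\item Palindromicity of $W(x)$ then settles the finite case.
\item In the infinite case, inversion over the Boolean lattice of subsets of $S$ gives coefficient $1$ for each $V\in\mathcal S^S_{\mathrm{fin}}$. Your computation $-(-1)^{|S|}\cdot\bigl(-(-1)^{|S|}\bigr)=1$ is right. It relies on $V\neq S$, which is exactly where the infiniteness of $W$ enters.
\end{itemize}

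You use two standard facts silently, and they are worth stating. First, $(W_U,U)$ is itself a Coxeter system. Second, $\ell_S$ restricted to $W_U$ coincides with $\ell_U$ (Humphreys \S 5.5). The second fact is what makes $\sum_{u\in W_U}x^{\ell_S(u)}$ in your factorisation equal to the intrinsic $W_U(x)$ of the statement. The first is what lets the induction hypothesis apply to $(W_U,U)$.

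Your observation that rationality of each $W(x)$ follows inductively from the vanishing identity is also the right way to justify the substitution $x\mapsto x^{-1}$ in $\mathbb{Q}(x)$. The paper takes this for granted.
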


%\begin{thm}[Steinberg's Formula] Let $(W,S)$ be a Coxeter 
%Steinberg Formula.
%\end{thm}

\section{Pseudo-transposition}\label{section pseudo transposition}

Pseudo-transpositions are defined in \cite{Muhlherr}  as a mean to construct different Coxeter generating sets on the same Coxeter group $G$: 

\begin{dfn} \label{def pseudotransp} Let $(W,S)$ be a Coxeter system. An element $s_1 \in S$ is called \emph{pseudo-transposition} if the following conditions are satisfied:
\begin{enumerate}
\item There exists a unique $s_0\in S$ such that $m_S(s_0,s_1)=2(2k+1)$ for some $k\in \mathbb{N}^*$.
\item For every $s\in S\backslash \{s_0,s_1\}$ we have $m_S(s,s_1)\in \{ 2,\infty\}$. 
\item If for $s\in S\backslash\{s_0,s_1\}$ it holds that $m_S(s,s_1)=2$, then $m_S(s,s_0)=2$.
\end{enumerate}
\end{dfn}

\begin{thm} \cite[Lemma 3.4]{Muhlherr}\label{thm: T Coxeter set} Let $(W,S)$ be a Coxeter system. Suppose that $s_1\in S$ is a pseudo-transposition and let $s_0\in S$ be the unique element such that $m_S(s_0,s_1)=2(2k+1)$ for $k\geq 1$. Set
$$ s_2:=s_1s_0s_1\quad \mathrm{and} \quad r:=(s_1s_0)^{2k+1}.$$
Then the set 
$$T:= (S\backslash \{ s_1\})\cup \{s_2,r\}$$
is a Coxeter generating set for $W$. 
\end{thm}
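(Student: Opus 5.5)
The plan is to show two things: $T$ generates $W$, and $(W,T)$ satisfies the Coxeter presentation read off from the orders $m_T$. For the Coxeter property, the cleanest route is to realise $W$ as an amalgamated product. Write $S' = S\setminus\{s_1\}$. By condition (2), $s_1$ commutes with the set $A=\{s\in S\setminus\{s_0,s_1\}: m_S(s,s_1)=2\}$ and has infinite-order products with every other $s\neq s_0$. This gives a splitting
$$W = W_{S'} *_{W_{A\cup\{s_0\}}} W_{A\cup\{s_0,s_1\}}.$$
By condition (3), every $s\in A$ commutes with $s_0$, hence also with $s_1$. Therefore $W_{A\cup\{s_0,s_1\}} = W_A\times D$, where $D=\langle s_0,s_1\rangle$ is dihedral of order $4(2k+1)$.

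First, generation. Since $s_1s_0$ has order $2(2k+1)$, the element $r=(s_1s_0)^{2k+1}$ is the central involution of $D$. Now $s_1s_0$ is a product of two reflections, and $r$ is an odd power of it. So $r$ is itself the reflection of $D$ that is a power-composition, which gives $s_1 = r\cdot(\text{word in } s_0,s_2)$. Concretely, I expect the relation $D=\langle s_0,s_2\rangle\times\langle r\rangle$ with $\langle s_0,s_2\rangle$ dihedral of order $2(2k+1)$. This uses that $s_0s_2=(s_0s_1)^2$ has odd order $2k+1$, and that the odd dihedral group times $C_2$ equals the dihedral group of order $4(2k+1)$. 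Hence $s_1\in\langle T\rangle$, and $T$ generates $W$.

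Second, the Coxeter presentation of $D$ in the new generators is $\{s_0,s_2,r\}$ with $m(s_0,s_2)=2k+1$, $m(r,s_0)=m(r,s_2)=2$. The vertex group $W_A\times D$ is therefore the Coxeter group on $A\cup\{s_0,s_2,r\}$ with $A$ commuting with everything. The edge group $W_{A\cup\{s_0\}}$ is a special subgroup in both presentations, and the other factor $W_{S'}$ is untouched. An amalgam of two Coxeter groups along a common special subgroup, whose generating sets agree on it, is the Coxeter group on the union of generators. Its labels are $\infty$ between generators lying in different factors and outside the common part. This gives the presentation for $(W,T)$: $m_T(s_2,s)=m_T(r,s)=\infty$ for $s\in S'\setminus(A\cup\{s_0\})$, $m_T(s_2,s_0)=2k+1$, and the remaining labels are $2$. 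The generators $s_0,s_2$ are conjugate, so the labels $m_T(s_2,s)$ for $s\in A$ should be consistent, and indeed they are $2$.

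The main obstacle is the dihedral identification $D\cong D_{2k+1}\times C_2$ with the correct generators. To handle it, I would verify directly that $r$ is central and that $r\notin\langle s_0,s_2\rangle$, which holds because that subgroup has odd index-$2$ rotations. I would also have to justify that the amalgam splitting of $W$ holds as stated. This follows from the standard fact that a Coxeter group whose diagram, with $\infty$-edges deleted, is covered by two full subdiagrams meeting in a third splits as the corresponding amalgam of special subgroups.
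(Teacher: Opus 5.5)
Your proof is correct, but it takes a different route from the paper's. The paper never decomposes $W$. It defines $m_T(t,t')$ as the order of $tt'$ in $W$, forms the abstract Coxeter group $W_T$ with the obvious surjection $\pi\colon W_T\to W$, and writes down an explicit inverse $\varphi\colon W\to W_T$ with $\varphi(s_1)=r(s_2s_0)^ks_2$. It then checks the relations $(\varphi(s)\varphi(s_1))^{m_S(s,s_1)}=e$ case by case and verifies $\varphi\circ\pi=\mathrm{Id}$ on generators.

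You instead use the visual splitting $W=W_{S\setminus\{s_1\}}*_{W_{A\cup\{s_0\}}}(W_A\times D)$. Condition (2) provides the splitting and condition (3) the product structure of the vertex group. You change Coxeter generators only inside the vertex group, via $D\cong\langle s_0,s_2\rangle\times\langle r\rangle$, and reassemble. This is legitimate because the edge group is a special subgroup for both Coxeter structures on $W_A\times D$.

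Both routes rest on the same dihedral identity, and each buys something different:
\begin{itemize}
\item The paper's argument is fully self-contained, using only universal properties of presentations.
\item Yours relies on standard amalgam facts for Coxeter groups, but it produces the whole Coxeter matrix of $T$ at once. Since in a Coxeter system the order of $st$ equals its label, your presentation immediately gives $m_T(s,s_2)=m_T(s,r)=\infty$ whenever $m_S(s,s_1)=\infty$. That is exactly Proposition \ref{prop labels}, which the paper must prove separately by a hyperbolic-geometry argument; in the paper's proof those labels are only defined as orders and left undetermined.
\end{itemize}

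One slip to fix: $r=(s_1s_0)^{2k+1}$ is not a reflection of $D$. It is the central rotation by $\pi$, a product of an even number of reflections. The clean reason that $r\notin\langle s_0,s_2\rangle$ is parity. Since $s_2=s_1s_0s_1$ has odd length, any even-length word in $s_0,s_2$ lies in $\langle s_0s_2\rangle=\langle (s_0s_1)^2\rangle$, which has odd order $2k+1$, while odd-length words are reflections. The involution $r$ has even length and order $2$, so it fits in neither class.
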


This theorem is stated in  \cite[Lemma 3.4]{Muhlherr} without proof. A sketch is provided in \cite[Lemma 4.2]{Galaxy}. For the convenience of the reader we include a detailed demonstration. Before doing so, we illustrate the geometric meaning of the elements $s_2$ and $r$ on the motivating example of the dihedral group.

We denote by $D_{2n}$ the dihedral group of order $2n$ which we identify with the isometry group of a regular $n$-gon in the Euclidean plane centered at the origin. It is generated by two reflexions $s_0,s_1$ whose reflexion axis intersect with an angle of $\pi/n$. The set $S=\{s_0,s_1\}$ is a Coxeter generating set for $D_{2n}$ of Coxeter diagram
\begin{figure}[H]%                 use [hb] only if necceccary!
  \centering
  \includegraphics[width=2.5cm]{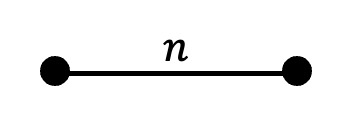}
 % \caption*{TestPicture}
%  \label{fig:test}
\end{figure}

In the case when $n$ is even then $D_{2n}$ contains the rotation by $\pi$ centered at the origin, which is given by $r=(s_1s_0)^{n/2}$. In view of its geometric description ($D_{2n}$ is a subgroup of $\mathrm{O}(2)$ and $r=-\mathrm{Id}$), it is clearly a central element of order $2$. If further $n=2(2k+1)$ for some $k\geq 1$, then $s_0$ and $s_2:=s_1s_0s_1$ generate a (proper) subgroup of $D_{4(2k+1)}$ isomorphic to $D_{2(2k+1)}$. Since $r$ does not belong to $D_{2(2k+1)}$ (because $2k+1$ is odd), we obtain an isomorphism
$$\langle s_0,s_1\rangle =D_{4(2k+1)}\cong D_{2(2k+1)}\times C_2=\langle s_0,s_2\rangle \times \langle r\rangle.$$
In particular, when $n=2(2k+1)$, the Coxeter diagram of $D_{4(2k+1)}$ with respect to the generating set $\{s_0,s_2,r\}$ is
\begin{figure}[H]%                 use [hb] only if necceccary!
  \centering
  \includegraphics[width=5cm]{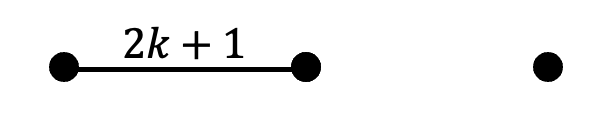}
 % \caption*{TestPicture}
%  \label{fig:test}
\end{figure}

In order to understand how to express $s_1$ in terms of $s_0,s_2$ and $r$, observe that for any linear rotation $R_\alpha$ of rotation angle $\alpha$ and any linear reflexion $s_\gamma$ through a line $\gamma$, the composition $R_\alpha(s_\gamma)$ is the reflexion through the line $R_{\alpha/2}(\gamma)$. After a choice of orientation, (one of ) the angles between the reflecting lines between $s_1$ and $s_0$ is $\pi/2(2k+1)$. The rotation $r(s_2s_0)^k$ is by an angle of $-\pi/(2k+1)$. In particular
$$s_1=r(s_2s_0)^ks_2.$$ 
If one prefers a completely algebraic approach one can also simply compute 
$$s_1=r^2 s_1 =r(s_1s_0)^{2k+1}s_1=r(\underbrace{s_1s_0s_1}_{=s_2}s_0)^{k}\underbrace{(s_1s_0)s_1}_{=s_2}= r(s_2s_0)^ks_2.$$

Note that when $n$ does not have the form $n=2(2k+1)$ such an isomorphism is impossible: If $n$ is not even this is because dihedral groups have even order, and if $n$ is a multiple of $4$ the groups $D_{2n}$ and $D_{n}\times C_2$ have non isomorphic abelianizations. It is easy to see what goes wrong in the above construction: If $n$ is not even then the group generated by $s_0$ and $s_2=s_1s_0s_1$ is not an index $2$ subgroup but the full group $D_{2n}$, and if $n$ is a multiple of $4$ then the rotation $r=(s_1s_0)^{n/2}$ belongs to the subgroup generated by $s_0$ and $s_2=s_1s_0s_1$.

\begin{proof}[Proof of Theorem \ref{thm: T Coxeter set}] It follows from the above example of the dihedral group $D_{2(2(2k+1))}$ that $T$ also is a generating set of $W$. Indeed we have replaced the generator $s_1$ by  $\{s_2,r\}$. But $\{s_0,s_2,r\}$ generate the dihedral group $\langle s_0,s_1\rangle$ and in particular contain $s_1$. %We can see this also purely algebraically:
%$$s_1=r^2 s_1 =r(s_1s_0)^{2k+1}s_1=r(\underbrace{s_1s_0s_1}_{=s_2}s_0)^{k}\underbrace{(s_1s_0)s_1}_{=s_2}= r(s_2s_0)^ks_2\in \langle T\rangle.$$

To see that $T$ is a Coxeter generating system, we define 
$$m_T(t,t'):= \mathrm{order \ of \ }tt' \mathrm{\ in \ } \langle T \rangle= W$$
for every $t,t'\in T$. Observe that $m_T(t,t)=1$ for every $t\in T$. This is clear for $t\in T \backslash \{s_2,r\}$. For $t=s_2$ or $r$ we verify that
\begin{eqnarray*}
s_2s_2&=&(s_1s_0s_1)(s_1s_0s_1)=e\\
rr&=&(s_1s_0)^{2k+1}(s_1s_0)^{2k+1}=(s_1s_0)^{2(2k+1)}=(s_1s_0)^{m_S(s_0,s_1)}=e.
\end{eqnarray*}
It follows that $m_T$ is a Coxeter matrix and we consider the corresponding abstract Coxeter group $W_T$ with generating set $T$ and Coxeter matrix $m_T$. By construction, we have a surjective homomorphism
$$\pi: W_T\rightarrow W=\langle T \rangle$$
sending $t\in W_T$ to $t\in W=\langle T\rangle$ for every $t\in T$. To show that $\pi$ is an isomorphism we define a homomorphism
$$\varphi:W=\langle S\rangle \longrightarrow W_T$$
by
\begin{eqnarray*}
\varphi(s)&=&s, \quad \mathrm{if \ }s\in S\backslash \{s_1\},\\
\varphi(s_1)&=&r(s_2s_0)^ks_2.
\end{eqnarray*}
To check that $\varphi$ is indeed a homomorphism we need to verify that $(\varphi(s)\varphi(s'))^{m_S(s,s')}=e$ for every $s,s'\in S$ such that $m_S(s,s')<\infty$.  
If $s,s'\in S\backslash \{s_1\}$ we have $\varphi(s)=s$, $\varphi(s')=s'$ and $m_S(s,s')=m_T(s,s')$ so that there is nothing to prove. Suppose that $s'=s_1$. 

For $s=s_0$ we have $m_S(s_0,s_1)=2(2k+1)$ and we compute
$$(\varphi(s_0)\varphi(s_1))^{2(2k+1)}=(s_0r(s_2s_0)^ks_2)^{2(2k+1)}=(r^2)^{2k+1}((s_0s_2)^{2k+1})^{2(k+1)}=e,$$
where we have used that $m_T(r,r)=1$ and $m_T(s_0,s_2)=2k+1$. 

For $s=s_1$ we have $m_S(s_1,s_1)=1$ and we compute
$$\varphi(s_1)\varphi(s_1)=r(s_2s_0)^ks_2r(s_2s_0)^ks_2=r^2s_2(s_0s_2)^k(s_2s_0)^ks_2=e.$$

For $s\neq s_0,s_1$ we either have $m_S(s,s_1)=\infty$, in which case there is nothing to prove, or $m_S(s,s_1)=2$. Note that in the latter case the definition of a pseudo-transposition imposes that $m_S(s,s_0)=2$, so that $s$ commutes with both $s_0$ and $s_1$ and thus also with $s_2$ and $r$.  In particular, $\varphi(s)$ commutes with $\varphi(s_1)$. 

Finally we show that $\varphi \circ \pi=\mathrm{Id}_{W_T}$: It is enough to check this on the set $T$. If $s\in T\setminus \{s_2,r\}$ this is clear by definition. For $s=s_2$ and $s=r$ we have
\begin{eqnarray*}
\varphi(\pi(s_2))&=&\varphi(s_2)=\varphi(s_1s_0s_1)=\varphi(s_1)\varphi(s_0)\varphi(s_1)=r(s_2s_0)^ks_2 s_0 r(s_2s_0)^ks_2\\
&=&r^2 (s_2s_0)^{2k+1}s_2=s_2,\\
\varphi(\pi(r))&=&\varphi(r)=\varphi((s_1s_0)^{2k+1})=(\varphi(s_1)\varphi(s_0))^{2k+1}=(r(s_2s_0)^ks_2 s_0)^{2k+1}\\
&=&r^{2k+1}((s_2s_0)^{2k+1})^k(s_2s_0)^{2k+1}=r.
\end{eqnarray*}
In particular, $\pi$ is injective. Since we already know that it is surjective, $\pi$ is indeed an isomorphism. 
\end{proof}

In the proof of the theorem, we have used the fact that whenever $m_S(s,s_1)=2$, for $s\in S\backslash \{s_0,s_1\}$ then $m_T(s,s_2)=m_T(s,r)=2$, which directly follows from the condition (3) of the definition of pseudo-transposition. According to the condition (2) of the definition, the only other possibility for $m_S(s,s_1)$ is $\infty$. In that case we can also conclude that $m_S(s,s_1)=m_T(s,s_2)=m_T(s,r)$ but this requires an  argument: 

\begin{prop}\label{prop labels} Let $s\in S\backslash \{s_0,s_1\}$. If $m_S(s,s_1)=\infty$, then $m_T(s,s_2)=m_T(s,r)=\infty$. 
\end{prop}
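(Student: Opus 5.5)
Since $s,s_0,s_1\in S$ and $(W,S)$ is a Coxeter system, I will argue inside the standard parabolic subgroup $W_{\{s,s_0,s_1\}}$, which is itself a Coxeter group with generating set $\{s,s_0,s_1\}$ and Coxeter matrix the restriction of $m_S$. The elements $s_2=s_1s_0s_1$ and $r=(s_1s_0)^{2k+1}$ lie in this parabolic subgroup, so the orders of $ss_2$ and $sr$ can be computed there. The goal is to show that neither $ss_2$ nor $sr$ has finite order.

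For $ss_2$ the plan is a conjugation trick. Note that $ss_2=s\,s_1s_0s_1$ is conjugate by $s_1$ to $s_1 s s_1 s_0$. If $ss_2$ had finite order, then $s_1ss_1$ and $s_0$ would generate a finite dihedral group. I will show this is impossible by passing to a quotient. Using the exchange condition (or the Tits representation), I would like to find a quotient in which the infinite order is visible. The cleanest candidate is the homomorphism to $\langle s\rangle * \langle s_1\rangle \cong C_2*C_2$ (infinite dihedral) obtained by killing $s_0$. This is well defined exactly when every relation involving $s_0$ is consistent with $s_0\mapsto e$. The relation $(s_0s_1)^{2(2k+1)}$ is fine. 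For $(ss_0)^{m}$, it becomes $s^m$, which is fine only if $m$ is even or $\infty$, and that is not guaranteed. So instead I would use the homomorphism to $\langle s, s_1\mid s^2,s_1^2\rangle$ sending $s_0\mapsto s_1$ when $m_S(s,s_0)$ is odd, since $2(2k+1)$ is even so the relation $(s_0s_1)^{2(2k+1)}$ still holds. If $m_S(s,s_0)=m$ is even, I send $s_0\mapsto e$. If $m=\infty$ either choice works. In the first map ($s_0\mapsto e$), $s_2\mapsto e$, which is useless. So for $s_2$ I will prefer the parity quotient combined with a different argument.

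I expect this to be the main obstacle, and a more robust route is the geometric/combinatorial one. In a Coxeter group, $ss_2$ is a product of two reflections, and it has finite order iff the two reflections generate a finite (dihedral) reflection subgroup, which happens iff their fixed walls in the Davis complex or Tits cone intersect. I would use the Tits representation $\sigma$ on $V=\bigoplus \mathbb{R}e_t$ with bilinear form $B(e_t,e_{t'})=-\cos(\pi/m(t,t'))$, with $-1$ when $m=\infty$. The reflection $s_2=s_1s_0s_1$ has root $v=s_1(e_{s_0})=e_{s_0}+2\cos(\pi/(2(2k+1)))e_{s_1}$. Then
$$B(e_s,v)=B(e_s,e_{s_0})-2\cos\!\big(\tfrac{\pi}{2(2k+1)}\big),$$
using $B(e_s,e_{s_1})=-1$. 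This is $\leq -2\cos(\pi/6)<-1$. For two reflections with roots $e_s,v$ satisfying $|B(e_s,v)|\geq 1$, the product has infinite order, since the restricted form on the span of the roots is not positive definite and the product acts as a nontrivial unipotent or hyperbolic map. This settles $m_T(s,s_2)=\infty$.

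For $r$, the plan is to use the fact that $r$ is a reflection in $W$: it is conjugate to $s_0$ or $s_1$ in the dihedral group $D_{4(2k+1)}$, namely $r=(s_1s_0)^{k}s_1(s_1s_0)^{-k}$ or a similar conjugate. I will write $r$ as $w s_1 w^{-1}$ for some $w\in\langle s_0,s_1\rangle$ and compute its root $u=w(e_{s_1})$ as an explicit combination $\alpha e_{s_0}+\beta e_{s_1}$ with $\alpha,\beta\geq 0$. Because $r$ is the "middle" reflection, orthogonal to the bisecting root, the coefficients are roughly equal to $\tfrac{1}{2\sin(\pi/(4(2k+1)))}$-scaled cosines. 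Then
$$B(e_s,u)=\alpha B(e_s,e_{s_0})-\beta\leq -\beta,$$
and I must check $\beta\geq 1$. This reduces to a dihedral trigonometric computation: the root at angle $\pi/2$ from $e_{s_1}$ inside the rank-2 root system has $e_{s_1}$-coefficient $\cot$-type and is at least $1$ for $k\geq1$. The same criterion as above then gives infinite order. I expect the careful bookkeeping of $\beta$ to be the only delicate step.
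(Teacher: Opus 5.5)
\textbf{The argument for $ss_2$ is sound; the argument for $sr$ has a genuine gap.}

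Your treatment of $ss_2$ is correct. It replaces the paper's upper half-plane picture by the Tits form. The root is $v=\sigma(s_1)e_{s_0}$, and $B(e_s,v)\le -2\cos(\pi/6)=-\sqrt3<-1$. This forces $e_s,v$ to be linearly independent, and the product acts on their span with determinant $1$ and trace $4B(e_s,v)^2-2>2$, so it has infinite order.

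\textbf{Where it fails.} You claim that $r=(s_1s_0)^{2k+1}$ is a reflection, conjugate in $\langle s_0,s_1\rangle$ to $s_0$ or $s_1$. This is false.
\begin{itemize}
\item $r$ is a word of even length $2(2k+1)$; it is the longest element of the dihedral parabolic of type $I_2(2(2k+1))$.
\item Hence $r$ lies in the kernel of the sign character $\varepsilon\colon W\to\{\pm1\}$, $\varepsilon(t)=-1$ for $t\in S$, whereas every conjugate of an element of $S$ has sign $-1$.
\item In the Tits representation, $\sigma(r)$ is $-\mathrm{Id}$ on $P=\mathrm{span}(e_{s_0},e_{s_1})$ and the identity on the $B$-orthogonal complement $P^{\perp}$. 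This is exactly the ``rotation by $\pi$'' the paper uses, both in the dihedral discussion and as the rotation about the vertex $y$.
\end{itemize}
So $\sigma(r)$ has a fixed space of codimension $2$ and there is no root $u$. Your planned computation of $\beta$ would concern a different element: the reflection $s_1r=(s_0s_1)^{2k}s_0$ in the root of $P$ orthogonal to $e_{s_1}$. It says nothing directly about the order of $sr$.

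\textbf{How to repair it with your own tools.} Look at $(sr)^2=s\cdot(rsr)$. This is a product of two reflections, with roots $e_s$ and $\sigma(r)e_s=e_s-2p$. Here $p$ is the $B$-orthogonal projection of $e_s$ onto $P$, which is well defined because $B|_P$ is positive definite. Then $B(e_s,\sigma(r)e_s)=1-2B(p,p)$. Solving with the Gram matrix of $P$ gives $B(p,p)=\frac{1+2ac+a^2}{1-c^2}$, where $a=\cos(\pi/m_S(s,s_0))\in[0,1]$ and $c=\cos\frac{\pi}{2(2k+1)}$. Hence $B(p,p)\ge 1/\sin^2\frac{\pi}{2(2k+1)}\ge 4$, and so $B(e_s,\sigma(r)e_s)\le -7$. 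Your criterion then shows that $(sr)^2$, and hence $sr$, has infinite order. This is the algebraic counterpart of the paper's argument: $s$ and $r$ both act on the geodesic through $y$ perpendicular to $\gamma_s$ as point reflections at two distinct points.
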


A purely algebraic proof of the proposition is sketched in  \cite[Lemma 4.2]{Galaxy}. We propose a geometric demonstration. 

\begin{proof}  Let $s\in S\backslash \{s_0,s_1\}$ and suppose that $m_S(s,s_1)=\infty$. We need to show that the orders of $ss_2$ and $sr$ are infinite. As $s$, $s_1$, $s_2=s_1s_0s_1$ and $r=(s_1s_0)^{2k+1}$ all lie in the triangular group $W_{\{s_0,s_1,s\}}$, we can without loss of generality suppose that $S=\{s_0,s_1,s\}$ and $W=W_{\{s_0,s_1,s\}}$. As we assume $m_S(s,s_1)=\infty$ the group $W$ is a hyperbolic triangular group which we can identify with the reflexion group of a hyperbolic triangle $\Delta$. We set ourselves in the upperhalf plane model for the hyperbolic plane $\mathbb{H}^2$ and choose $\Delta$ to have one vertex $\infty$, and the two others $x,y$ on the unit cercle $S^1$ centered at the origin, with $\mathrm{Re}(x)<\mathrm{Re}(y)$. In this way we can view the three reflecting geodesics $\gamma_s$, $\gamma_{s_0}$, $\gamma_{s_1}$ of $s,s_0$ and $s_1$ as follows:  $\gamma_s$ and $\gamma_{s_1}$ are the (vertical) geodesic through $x$, respectively $y$, and $\infty$, and $\gamma_{s_0}$ is the intersection of the unit cercle $S^1$ with the upperhalf plane. We set $\ell:=m_S(s,s_0)$. The point $x$ is thus chosen so that the angle between the faces $\gamma_s$ and $\gamma_{s_0}$ of $\Delta$ is $\pi/\ell$, while $y$ is chosen so that the angle between the faces $\gamma_1$ and $\gamma_0$ of $\Delta$ is $\pi/(2k+1)$. In this setting we can identify $s,s_0,s_1$ with the reflexions through the geodesics $\gamma_s,\gamma_{s_0},\gamma_{s_1}$ respectively and identify $W$ with the corresponding subgroup of $\mathrm{Isom}(\mathbb{H}^2)$. 
\begin{figure}[H]%                 use [hb] only if necceccary!
  \centering
  \includegraphics[width=7cm]{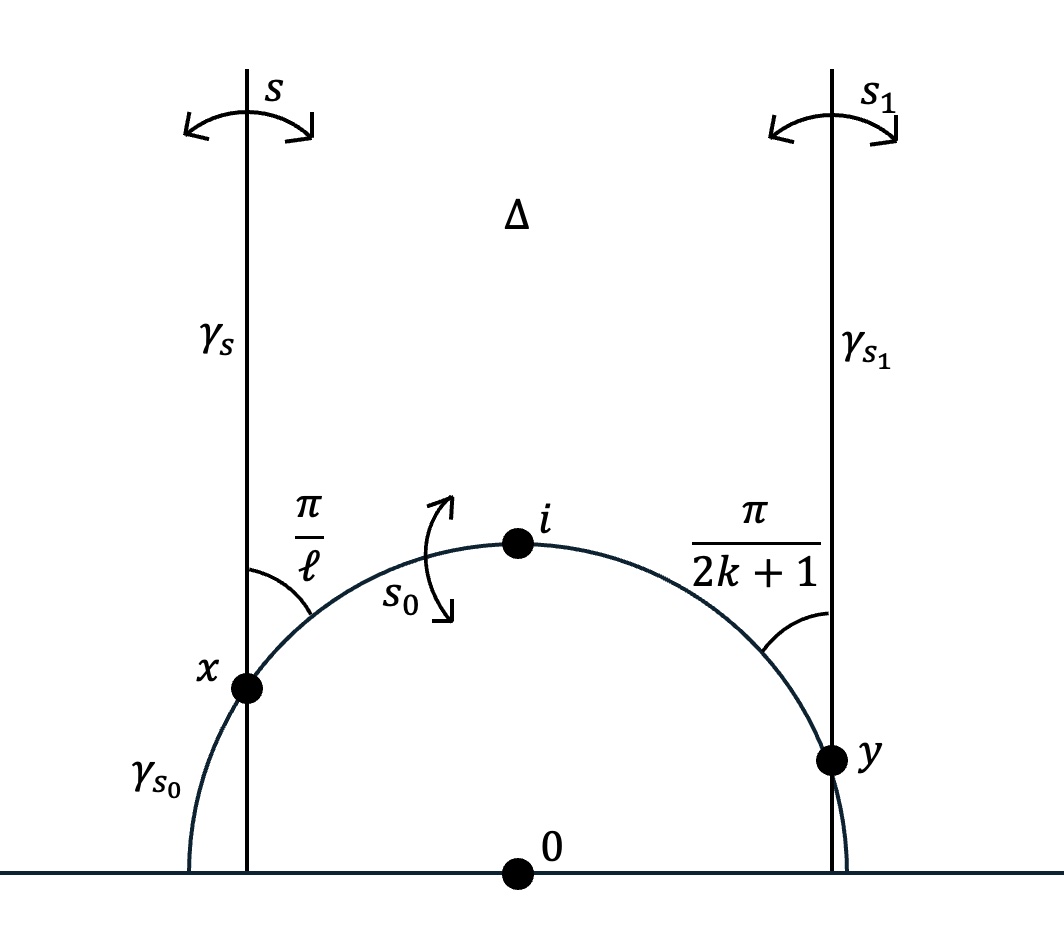}
 % \caption*{TestPicture}
%  \label{fig:test}
\end{figure}
Observe that since $\ell\geq 2$, the point $x$ satisfies $-1\leq \mathrm{Re}(x)\leq 0$ (with $-1=\mathrm{Re}(x)$ if and only if $\ell=\infty$ and $\mathrm{Re}(x)=0$ if and only if $\ell=2$). As the angle $\alpha$ of $\Delta$ at $y$ satisfies $\pi/3 \leq \alpha=\pi/(2k+1)<\infty$, we deduce that $1/2\leq \mathrm{Re}(y)<1$. 

In our setting, both $s_2$ and $r$ lie in the stabilizer of $y$. The reflexion $s_2=s_1s_0s_1$ is the reflexion by the geodesic $s_1(\gamma_0)$, while $r$ is the rotation by $\pi$ centered at $y$. 

Order of $sr$: We let $\gamma_r$ be the unique geodesic through $y$ perpendicular to $\gamma_s$. Both $s$ and $r$ send $\gamma_r$ to itself. The action of $s$ and $r$ restricted to $\gamma_r$ correspond to the reflexions by the points $\gamma_s\cap \gamma_r$ and $y$ respectively. Since these two points are distinct, $\langle s,r\rangle=\langle s\rangle *\langle r\rangle \cong D_\infty$ and the order of $sr$ is infinite as expected. 
\begin{figure}[H]%                 use [hb] only if necceccary!
  \centering
  \includegraphics[width=7cm]{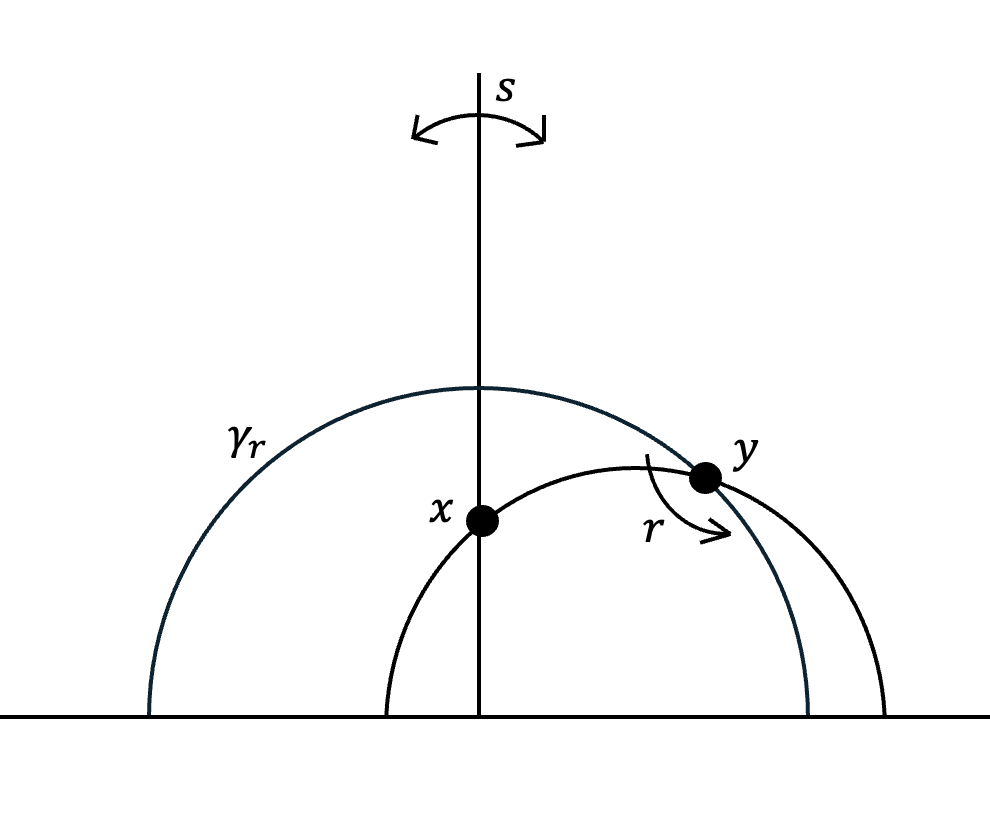}
 % \caption*{TestPicture}
%  \label{fig:test}
\end{figure}

Order of $ss_2$: As pointed out above, $s_2$ is the reflexion by the geodesic $s_1(\gamma_0)$, which is the half cercle perpendicular to the boundary intersecting the real line in the points $\pm 1+2 \mathrm{Re}(y)$. Since we are in the situation where $1/2\leq   \mathrm{Re}(y)$, we have 
$$\mathrm{Re}(x)\leq \frac{1}{2}\leq -1+2\mathrm{Re}(y).$$
In particular the two geodesics $\gamma_s$ and $s_1(\gamma_{s_0})$ intersect at most on the boundary and thus the corresponding group generated by their reflexions is again the infinite dihedral group. 
%we can conclude that the reflecting geodesic $s_1(\gamma_0)$ does not intersect $\gamma_s$, neither in the hyperbolic plane, nor on its boundary (the latter is vertical over $\mathrm{Re}(x)\leq 0$ and $0<\pm 1+2 \mathrm{Re}(y)$). It folows that there exists a (unique) geodesic $\gamma$  perpendicular to both $\gamma_s$ and $s_1(\gamma_0)$. The actions of $s$ and $s_2$ restrict to an action by reflection on $\gamma$. Since the fixed points of $s$ and $s_2$ in $\gamma$ are distinct we conclude again that $\langle s,s_2\rangle=\langle s\rangle *\langle s_2\rangle \cong D_\infty$ and the order of $ss_2$ is infinite, which finishes the proof of the proposition. 
%$$PICTURE \ 3 \ Prop$$
\end{proof}

\section{Comparison of the growth of $S$ and $T= S\backslash \{ s_1\}\cup \{s_2,r\}$}\label{Section Comparison of growth}

In this section we prove that blow downs along pseudo-transpositions cannot increase the exponential growth rate. Let us start by giving a precise version of Theorem \ref{thm main blow down}:

\begin{thm} \label{replacement decrease} Let $(W,S)$ be a Coxeter system. Suppose that $s_1\in S$ is a pseudo-transposition and let $s_0\in S$ be the unique element such that $m_S(s_0,s_1)=2(2k+1)$ for $k\geq 1$. Set
$$ s_2:=s_1s_0s_1, \quad r:=(s_1s_0)^{2k+1}$$
and
$$T:= S\backslash \{ s_1\}\cup \{s_2,r\}.$$
Then
$$\omega(W,T)\geq \omega(W,S).$$
\end{thm}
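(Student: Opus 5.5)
The plan is to compare the two growth rates through Steinberg's formula (Theorem \ref{Steinberg}). Write $f_S(x):=1/W_S(x^{-1})$ and $f_T(x):=1/W_T(x^{-1})$. By Theorem \ref{Steinberg} these are rational functions with $f_S(x)\to 1$ and $f_T(x)\to1$ as $x\to\infty$. The growth rate $\omega(W,S)$ is the inverse of the radius of convergence of $W_S$. Since $W_S$ has nonnegative coefficients, Pringsheim's theorem says this radius is a real positive singularity, hence a zero of $1/W_S$. So $\omega(W,S)$ is the largest real zero of $f_S$ in $[1,\infty)$ (or $\omega=1$ if there is none), and $f_S>0$ on $(\omega(W,S),\infty)$. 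The same holds for $T$. It therefore suffices to show $f_T(\omega_S)\le 0$, where $\omega_S:=\omega(W,S)$. Then $f_T$ has a zero in $[\omega_S,\infty)$, which gives a pole of $W_T$ in $(0,1/\omega_S]$ and hence $\omega(W,T)\ge\omega_S$. If $\omega_S=1$ there is nothing to prove.

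Next I would compute $f_T-f_S$ by comparing the spherical subsets. Let $C\subset S\setminus\{s_0,s_1\}$ be the set of $s$ with $m_S(s,s_1)=2$; by Definition \ref{def pseudotransp} these also commute with $s_0$. Spherical subsets of $S$ not containing $s_1$ are exactly the spherical subsets of $T$ containing neither $s_2$ nor $r$, so their terms cancel. By condition (2) of the definition, a spherical subset of $S$ containing $s_1$ has the form $V\cup\{s_1\}$ or $V\cup\{s_0,s_1\}$ with $V\subset C$ spherical. By Proposition \ref{prop labels} and the remark preceding it, a spherical subset of $T$ containing $s_2$ or $r$ is $V$ union one of $\{s_2\},\{r\},\{s_0,s_2\},\{s_0,r\},\{s_2,r\},\{s_0,s_2,r\}$, with $V\subset C$ spherical. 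All of these are direct products with $W_V$, because $V$ commutes with $s_0,s_1,s_2,r$.

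Writing $[n]:=1+x+\dots+x^{n-1}$ and using the Poincaré polynomials of the dihedral groups $D_{4(2k+1)}$ and $D_{2(2k+1)}$, this gives
$$f_T(x)-f_S(x)=g(x)\sum_{V\subset C\ \mathrm{spherical}}\frac{(-1)^{|V|}}{W_V(x)}=g(x)\,f_C(x),$$
where the last equality is Theorem \ref{Steinberg} applied to $(W_C,C)$, and
$$g(x)=-\frac{1}{1+x}+\frac{2}{(1+x)^2}+\frac{1}{(1+x)[2k+1]}-\frac{1}{(1+x)^2[2k+1]}-\frac{1}{(1+x)[4k+2]}.$$
Here the $S$-side contributed $-\tfrac{1}{1+x}+\tfrac{1}{[2][4k+2]}$, which is subtracted. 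One then checks that $g(x)\le 0$ for $x\ge1$. This is an elementary computation: clear denominators using $[4k+2]=(1+x^{2k+1})[2k+1]$ and verify the sign of the resulting polynomial. At $x=1$ one gets $g(1)=-\tfrac12+\tfrac12+\tfrac1{2(2k+1)}-\tfrac1{4(2k+1)}-\tfrac1{2(4k+2)}=0$, and I expect $g$ to be strictly negative for $x>1$.

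The remaining step, which I expect to be the main obstacle, is to show $f_C(\omega_S)\ge 0$. Then $f_T(\omega_S)=f_S(\omega_S)+g(\omega_S)f_C(\omega_S)\le 0$ and we are done. If $W_C$ is finite, $f_C(x)=1/W_C(x^{-1})>0$ for $x>0$, which settles it. In general, $(W_C,C)$ is a standard parabolic subsystem of $(W,S)$, so $\omega(W_C,C)\le\omega_S$. Since $f_C>0$ beyond its largest real zero $\omega(W_C,C)$ (the same Pringsheim argument as above), we get $f_C(\omega_S)\ge0$. The point needing care is that $f_C$ has no real zero in $(\omega(W_C,C),\infty)$. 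This holds because such a zero $x_0$ would make $W_C(y)$, a series with nonnegative coefficients, have a pole at $y=1/x_0$ inside its disc of convergence, which is impossible.
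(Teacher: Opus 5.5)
Your argument is correct and is essentially the paper's proof: Steinberg's formula, cancellation of the spherical subsets avoiding $s_1$ (resp.\ $s_2,r$), factoring out the Steinberg sum of $C=S_2$, and a sign check on the dihedral correction term. Your $g(x)$ is exactly minus the paper's bracket evaluated at $x^{-1}$. The computation you left pending does go through: clearing denominators gives $g(x)=\frac{x^{2k+2}(1-x^{2k})}{(1+x)^2[4k+2]}\leq 0$ for $x\geq 1$.

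The only real difference is the endgame. You evaluate at the boundary point $\omega(W,S)$ and apply the intermediate value theorem, which is why you need the extra argument that $f_C(\omega(W,S))\geq 0$. The paper instead proves $W_S(x)<W_T(x)$ on the open interval $(0,\min\{R_S,R_T\})$, where positivity of $1/W_{S_2}(x)$ follows immediately from convergence.
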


\begin{proof} We apply the following strategy: Let $0<R_S, R_T\leq 1$ denote the radius of convergence of $W_S$ and $W_T$ respectively and set $R=\mathrm{min} \{R_S,R_T\}$. In virtue of Pringsheim's Theorem the growth function $W_T$ diverges when evaluated on its radius of convergence $R_T$ (and obviously converges for any real value $0\leq x<R_T$).   It is thus enough to show that
$$W_S(x)< W_T(x)$$
for $0<x<R$ which immediately implies that $R_S\geq R_T$ and hence $\omega(W,S)=1/R_S \leq 1/R_T = \omega(W,T)$. Since $W_S(x)$ and $W_T(x)$ are strictly positive for $0<x<R$ this will be equivalent to showing
$$ \frac{1}{W_S(x)}-\frac{1}{W_T(x)}=\frac{W_T(x)-W_S(x)}{W_S(x)W_T(x)}> 0.$$

Motivated by the observation that our generating sets $S$ and $T$ as in the theorem share a lot of subsets generating a finite group, we want to use Steinberg's Formula (Theorem \ref{Steinberg}). Define
$$S_2:=\{ s\in S\mid m(s,s_1)=2\} \ \mathrm{and} \ S_\infty:=\{ s\in S\mid m(s,s_1)=\infty\}.$$
By construction we have 
$$S=\{s_0,s_1\} \amalg S_2 \amalg S_\infty \ \mathrm{and} \ T=\{s_0,s_2,r\}  \amalg S_2 \amalg S_\infty.$$
Exploiting Proposition \ref{prop labels}, we summarize the labels appearing between $S_2\amalg S_\infty$ and the remaining two respectively three elements in $S$ and $T$  in the following diagram:

\begin{figure}[H]%                 use [hb] only if necceccary!
  \centering
  \includegraphics[width=10cm]{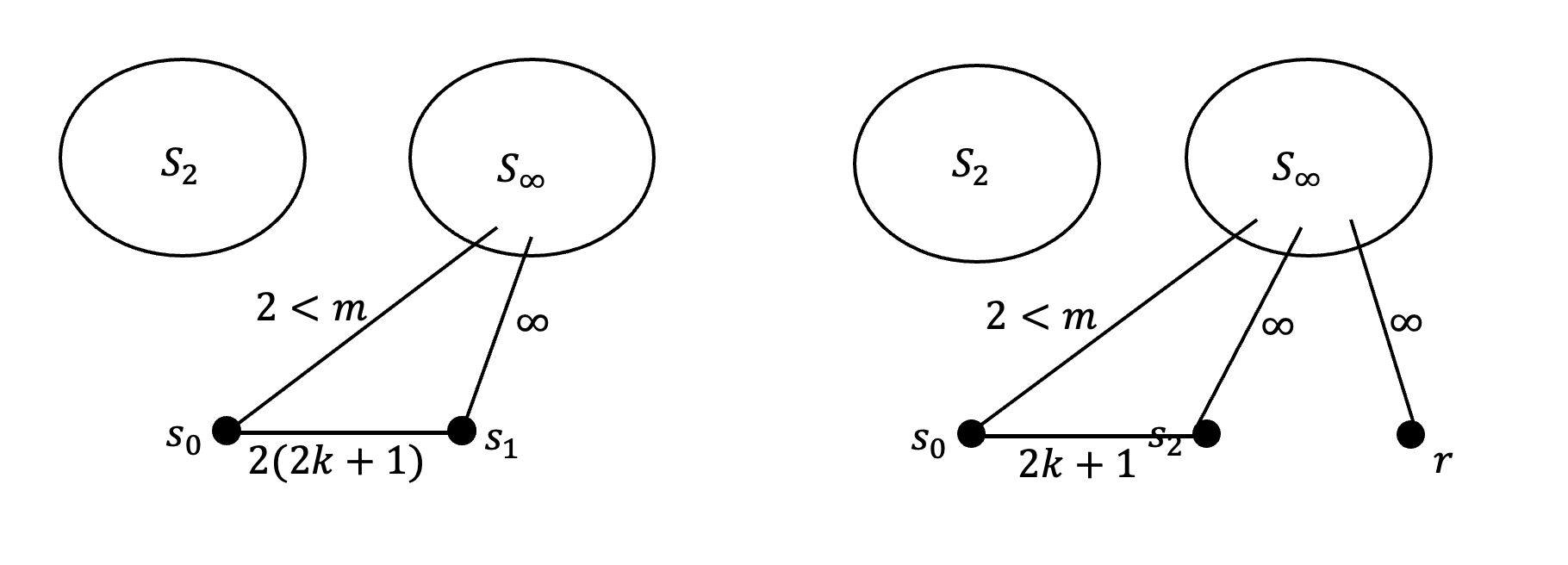}
 % \caption*{TestPicture}
%  \label{fig:test}
\end{figure}
\noindent Note that the labels between elements of $S_2 \amalg S_\infty$ are  omitted and are identical in both Coxeter diagrams. 

Observe that there is a bijection between 
$$\{ U \in \mathcal{S}^S_{\mathrm{fin}}\mid U\cap S_\infty \neq \emptyset\} \ \mathrm{and} \ \{ U \in \mathcal{S}^T_{\mathrm{fin}}\mid U\cap S_\infty \neq \emptyset\} .$$
Indeed, a subset intersecting $S_\infty$ can only generate a finite subgroup if it is contained in $\{s_0\}\amalg S_2\amalg S_\infty$. Indeed, if in $S$ it would contain $s_1$, there would be $u\in U\cap S_\infty$ with $m(s_1,u)=\infty$ so that $\langle U\rangle$ would contain the infinite dihedral group. Likewise in $T$ since $m(r,u)=m(s_2,u)=\infty$ for every $u\in S_\infty$ by Proposition  \ref{prop labels}. Writing the difference
$$\frac{1}{W_S(x^{-1})}-\frac{1}{W_T(x^{-1})}$$
using Steinberg's formula (Theorem \ref{Steinberg}), we can thus restrict to taking the sum over subsets generating finite groups not intersecting $S_\infty$, so that they are contained in $\{s_0,s_1\}\amalg S_2$ for $S$ and $\{s_0,s_2,r\}\amalg S_2$ for $T$:
\begin{align*}
\frac{1}{W_S(x^{-1})}-\frac{1}{W_T(x^{-1})}&=&\sum_{U\subset \mathcal{S}^{\{s_0,s_1\}\amalg S_2}_{\mathrm{fin}}} \frac{(-1)^{|T|}}{W_U(x)}-\sum_{U\subset \mathcal{S}^{\{s_0,s_2,r\}\amalg S_2}_{\mathrm{fin}}} \frac{(-1)^{|T|}}{W_U(x)}\\\\
&=&\frac{1}{W_{\{s_0,s_1\}\amalg S_2}(x^{-1})}- \frac{1}{W_{\{s_0,s_2,r\}\amalg S_2}(x^{-1})},
\end{align*}
where for the last equality we applied Steinberg's formula to the groups $\langle \{s_0,s_1\}\amalg S_2 \rangle $ and  $\langle \{s_0,s_2,r\}\amalg S_2 \rangle $. Since elements of $S_2$ commute with $s_0,s_1,s_2$ and $r$, and further that $r$ commutes with $s_0$ and $s_2$, we can use the formula for products of growth functions \cite[VI.A, Proposition 4 (i)]{dlH} to obtain, after substituting $x^{-1}$ by $x$,
\begin{equation}\label{equation}\frac{1}{W_S(x)}-\frac{1}{W_T(x)} =\frac{1}{W_{S_2}(x)}\left( \frac{1}{W_{\{s_0,s_1\}}(x)}  -\frac{1}{W_{\{s_0,s_2\}}(z)W_{\{r\}}(x)} \right). \end{equation}
Since $\langle S_2\rangle$ is a subgroup of $W$ the radius of convergence of $W_{S_2}$ is greater or equal to $R$. In particular $0<W_{S_2}(x)<\infty$ for $x\in (0,R)$. Thus, the left hand side in (\ref{equation}) is greater or equal to $0$ if and only if the term inside the parenthesis on the right hand side is greater or equal to $0$. Knowing that the growth function with respect to the standard generators of a dihedral group of order $2\ell$ is 
$$1+2x+\ldots +2x^{\ell-1}+x^\ell=(1+x)(1+x+\ldots +x^{\ell-1})$$
we obtain
\begin{eqnarray*}
 \frac{1}{W_{\{s_0,s_1\}}(x)}  &-&\frac{1}{W_{\{s_0,s_2\}}(x)W_{\{r\}}(x)} \\
 &=& \frac{1}{(1+x)(1+x+\ldots +x^{4k+1})}-\frac{1}{(1+x)^2(1+x+\ldots+x^{2k})}\\
 &=&\frac{(1+x)(1+x+\ldots+x^{2k})-(1+x+\ldots +x^{4k+1})}{(1+x)^2 (1+x+\ldots +x^{4k+1})(1+x+\ldots+x^{2k})}.
\end{eqnarray*}

For $x\in (0,R)$ the denominator of this expresion is strictly positive so we only need to show that the numerator is positive for $x\in (0,R)$. The numerator equates to
$$x+x^2+\ldots+x^{2k}-x^{2k+2}-\ldots -x^{4k+1}=x(1-x^{2k+1})(1+x+\ldots+x^{2k-1}).$$
Since $0<x<R\leq 1$, this expression is indeed strictly positive, which finishes the proof of the theorem. 

\end{proof}

\section{Examples}
We present here a few examples illustrating our main result. The first family of examples shows that the inequality in Theorem \ref{replacement decrease} is strict for certain hyperbolic triangular groups. The second shows that the inequality can be an equality. Finally we show that the inequality 
$$\omega_\mathrm{Cox}(G)\geq \omega(G)$$
is strict for $G=C_2*\mathrm{Sym}(5)$.

\subsection{Growth rates of triangular groups}

Let $k,m\in \mathbb{N}$ with $n\geq 2$. Set $S=\{s_0,s_1,t\}$ and consider the triangular group $(W,S)$ given by the triangle below. The element $s_1$ is a pseudo-transposition, so that applying the procedure of Section \ref{section pseudo transposition}, the group also admits the Coxeter system $T=\{s_0,s_2,r,t\}$. 

\begin{figure}[H]%                 use [hb] only if necceccary!
  \centering
  \includegraphics[width=9cm]{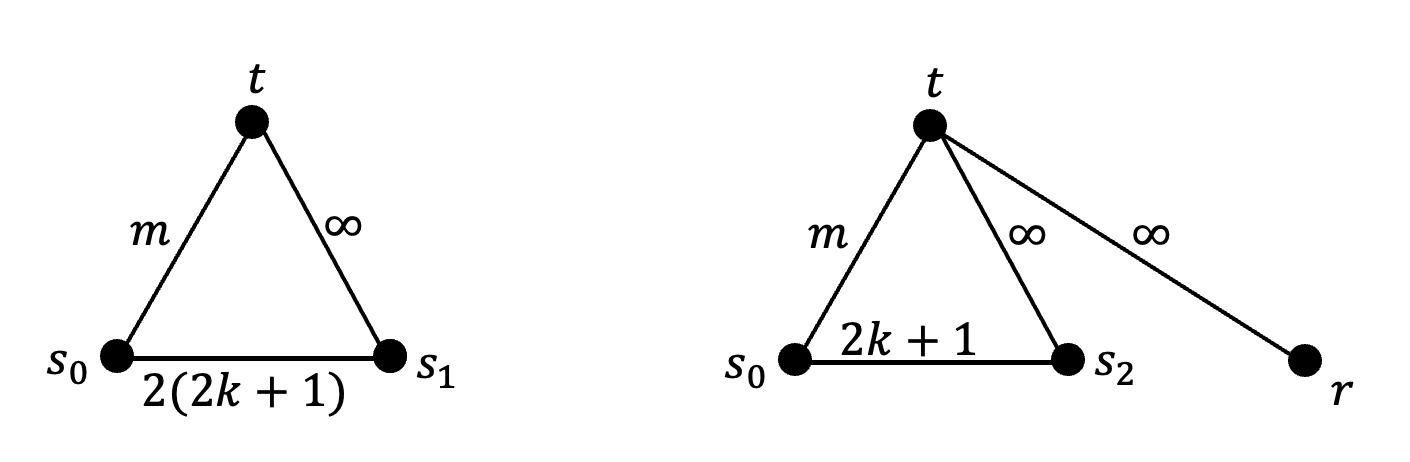}
 % \caption*{TestPicture}
%  \label{fig:test}
\end{figure}

We apply Steinberg's formula (Theorem \ref{Steinberg}) to obtain
$$W_S(z)=\frac{1+z-z^m-z^{m+1}-z^{4k+2}-z^{4k+3}+z^{4k+m+2}+z^{4k+m+3}}{1-2z+z^{m+1}+z^{4k+3}-z^{4k+m+2}}$$
and 
\[
\resizebox{1\linewidth}{!}{$
W_T(z)=\frac{1+2z+z^2-z^m-z^{m+1}-2z^{m+2}-z^{2k+1}-2z^{2k+2}-z^{2k+3}+z^{2k+m+1}+2z^{2k+m+2}+z^{2k+m+3}}{1-2z-z^2+2z^{m+1}+z^{2k+2}+z^{2k+3}-z^{2k+m+1}-z^{2k+m+2}}$.}
\]
Let $R_S$ and $R_T$ be the radii of convergence of $W_S$ and $W_T$ respectively. We will show that the inequality
$$\omega(W,S)<\omega(W,T)$$
is strict by showing that $R_S>R_T$. Again invoking  Pringsheim's Theorem, when the growth functions are rational, we know that the radius of convergence  equals  the smallest positive zero of the denominator. The claim will thus follow from the stronger strict inequality 
$$q_S(t)>q_T(t)$$ 
for any $t$ in the open interval $(0,1)$, where $q_S$ and $q_T$ are the denominators of $W_S$ and $W_T$ respectively. This shows the strict inequality since $W$ is hyperbolic and thus has exponential growth rate, so that $\omega(W,S)$ and $\omega(W,T)$ are strictly greater than $1$. The difference of $q_S$ and $q_T$ takes the form
\begin{align*}
q_S(t)-q_T(t)=&\ t^2-t^{m+1}-t^{2k+2}-t^{2k+3}+t^{2k+m+1}+t^{2k+m+2}+t^{4k+3}-t^{4k+m+2}\\
=&\ t^2(1-t^{m-1})(1+t^{4k+1}-t^{2k}-t^{2k+1})\\
=&\ t^2(1-t^{m-1})(1-t^{2k})(1-t^{2k+1})\\
>&\ 0
\end{align*}
for $t\in (0,1)$ since each factor is $>0$. 

As an example, for $m=2$ and $k=1$ we obtain
$$\omega(W,S)\sim 1.57015... < 1.83928...\sim \omega(W,T).$$ 

\subsection{Examples with equal growth rate} It is easy to somehow artificially produce examples of Coxeter groups with pseudo-transpositions for which the growth rates with respect to $S$ or $T$ (as in Theorem  \ref{replacement decrease}) are equal. Let $(W_1,S_1)$ be a Coxeter system with a pseudo-transposition $s_1$. (For example $W_1$ could be the dihedral group of order $2(2k+1)$ for some $k\geq 1$ with its canonical Coxeter generators. Or a hyperbolic triangular group as in the previous example.) Let $T_1$ be the Coxeter generating set of $W_1$ as in Theorem \ref{replacement decrease}. Let $(W_2,S_2)$ be another Coxeter system such that $\omega(W_2,S_2)\geq \omega(W_1,T_1)$. (For example $W_2$ could be a free product of sufficiently many copies of $C_2$ with the canonical generators. Or in the case of $W_1$ dihedral it could be the trivial group.) We consider the product $W=S_1\times S_2$ generated by $S=S_1\cup S_2$. Then $s_1=(s_1,e)$ is also a pseudo-transposition in the product and the corresponding alternative generating set as in Theorem \ref{replacement decrease} is precisely $T=T_1\cup S_2$. But for products generated by a union of generating sets in each factor we know that \cite[VI.A, Proposition 4 (i)]{dlH}
$$\omega(W_1\times W_2, S_1\cup S_2)=\mathrm{max}(\omega(W_1, S_1),\omega( W_2,  S_2)).$$
Since by assumption and Theorem \ref{replacement decrease}
$$\omega(W_2,S_2)\geq \omega(W_1,T_1)\geq \omega(W_1,S_1),$$
it is immediate that 
$$\omega(W,S)=\omega(W_1\times W_2, S_1\cup S_2)=\omega( W_2,  S_2)=\omega(W_1\times W_2, T_1\cup S_2)=\omega(W,T).$$

\subsection{Non Coxeter generating sets}\label{Sym5} The aim of this example is to show that 
$$\omega_\mathrm{Cox}(C_2*\mathrm{Sym}(5))>\omega(C_2*\mathrm{Sym}(5)).$$
Let $t$ be the generator of $C_2$. In the symmetric group, we denote the elementary transpositions by $s_i=(i,\ i+1)$ for $1\leq i\leq n-1$ and set $\sigma=(1,\ 2, \ 3, \ 4, \ 5)$. In $\mathrm{Sym}(5)$ we consider the two standard generating sets
$$S_0=\{s_1,\ldots, s_{4}\} \quad \mathrm{and} \quad T_0=\{s_1,\sigma\},$$
giving the corresponding generating sets 
$$S=\{t\}\cup S_0 \quad \mathrm{and}\quad T=\{t\}\cup T_0$$
of the free product $C_2*\mathrm{Sym}(5)$. Clearly only $S$ is a Coxeter system. We start by showing that 
$$\omega(C_2*\mathrm{Sym}(5),S)>\omega(C_2*\mathrm{Sym}(5),T).$$
We first compute the two growth series in $\mathrm{Sym}(5)$. By Steinberg's formula (Theorem \ref{Steinberg}) we have
$$W_{S_0}(z)=1+4z+9z^2+15z^3+20z^4+22z^5+20z^6+15z^7+9z^8+4z^9+z^{10}.$$
For the growth serie with respect to $T_0$ we compute by brute force 
$$W_{T_0}(z)=1+3z+6z^2+10z^3+16z^4+24z^5+29z^6+21z^7 +6z^8+3z^9+z^{10}.$$
It is very surprising that no expression for the growth serie for these standard generating sets of $\mathrm{Sym}(n)$ given by one transposition and one $n$-cycle seems to be known in general. Not even the length of a longest element nor the uniqueness of the longest element is known \cite{SymAI}.

Applying the formula for growth of free products  \cite[VI.A, Proposition 4 (ii)]{dlH} we obtain
$$W_S(z)=\frac{W_{S_0}(z)(1+z)}{1-4z^2 -9z^3 -15z^4 -20z^5 -22z^6 -20z^7 -15z^8 -9z^9 -4z^{10}-z^{11}}$$
 and 
 $$W_T(z)=\frac{W_{T_0}(z)(1+z)}{1-3z^2 -6z^3 -10z^4 -16z^5 -24z^6 -29z^7 -21z^8 -6z^9 -3z^{10} -z^{11}}.$$ 
 
 The radius of convergence of $W_S$ is $0.324227...$ and the one of $W_T$ is the slightly greater $0.358345...$. In particular we indeed obtain the strict inequality
 $$\omega(W,S)>\omega(W,T).$$
 Observe that for the corresponding growth rates for $n=3$ we obtain the inverse inequality, and for $n=4$ there is equality due to the fact that the Cayley graphs of the two generating sets on $\mathrm{Sym}(4)$ are by accident isomorphic. 
 
 Finally we show that the Coxeter group $C_2*\mathrm{Sym}(5)$ is rigid in the sense that any Coxeter system has to be conjugated to $S$. It could be that this is well known but we were unable to locate a proof in the litterature. In any case this allows to conclude that 
$$\omega_\mathrm{Cox}(C_2*\mathrm{Sym}(5))=\omega(C_2*\mathrm{Sym}(5),S)>\omega(W,T)\geq\omega(C_2*\mathrm{Sym}(5)).$$
 Let $S'$ be a Coxeter system for $C_2*\mathrm{Sym}(5)$. We consider the action of  $C_2*\mathrm{Sym}(5)$ on its associated Bass-Serre tree. Elements of $C_2*\mathrm{Sym}(5)$  have finite order if and only if they are stabilizers of vertices and hence conjugated to either $C_2$ or $\mathrm{Sym}(5)$. The product of two stabilizers of distinct vertices is a hyperbolic transformation of the tree and hence has infinite order. It follows that if we omit the $\infty$ labels (but keep all finite $\geq 2$ labels) in the Coxeter diagram of $S'$ we obtain as many connected components as the set $\{\mathrm{Stab}(s)\mid s\in S'\}$, say $c$. But the abelianization of such a group is $C_2^c$. Since the abelianization of $C_2*\mathrm{Sym}(5)$ is $C_2^2$ this forces $c=2$. In order to generate the full group these two connected components have to stabilize adjacent edges so that we can up to conjugating suppose that they lie either in $C_2$ or $\mathrm{Sym}(5)$. Now by the classification of finite Coxeter systems (and the fact that $\mathrm{Sym}(5)$ does not admit a non trivial factorization) we conclude that $S'$ is conjugated to $S$. 
 
\section{Proof of Theorem \ref{thm even}} \label{algo}

We shortly recall Mihalik's algorithm which for an even Coxeter group $W$, starting from any Coxeter system containing an odd label, constructs a Coxeter system with one less generator, and one less odd label. Iterating this procedure leads to the unique even Coxeter system for $W$. This is done in two steps: Diagram twisting and blow down along a pseudo-transposition. In presence of an odd label $m(s_0,s_2)=2k+1$, one would like to perform a blow down along some dihedral group of order $4(2k+1)$ containing $\langle s_0,s_2\rangle$ as an index $2$ subgroup. This is possible if (and only if) the Coxeter diagram is as pictured on the right hand side of the illustration in Section \ref{Section Comparison of growth}. The purpose of the diagram twisting is to modify the initial Coxeter system to be in a situation where the blow down can be applied.

%then replacing a triangle by an edge. The latter procedure is precisely the inverse of exploiting pseudo-transpositions to produce a Coxeter system with one more generator. In view of the shape of the Coxeter group on the right hand side of the Illustration REF \marginpar{ref \`a la figure 3.1} if there is an odd edge between vertices $s_0,s_2$, say $m(s_0,s_2)=2k+1$ such that one of the vertex, say $s_2$, has the property that $m(s,s_2)\in \{2,\infty\}$ for any $s\in S\setminus \{s_0,s_2\}$. This is the purpose of the diagram twisting. 

\subsection*{First step: Diagram twisting} We begin with the standard construction of diagram twisting from \cite{BradyEtal}. Before that recall that in a finite Coxeter group $W_V$, there is a unique longest element, $\Delta$ and its action by conjugation induces  a permutation of its Coxeter system $V$. 

\begin{dfn} \label{diag twist} Let $\Gamma$ be a Coxeter diagram with vertex set $S$. Let $U$ and $V$ be disjoint subset of $S$ such that
\begin{enumerate}
\item $W_V$ is a finite Coxeter group,
\item for every $s\in S\setminus (U\cup V)$ such that there exists $u\in U$ with $m(s,u)<\infty$ it holds that $m(s,v)=2$ for every $v\in V$.
\end{enumerate}
The diagram twisting of $\Gamma$ obtained by twisting $U$ by $\Delta$, where $\Delta$ is the longest element in $W_V$, is the diagram with vertex set $S$ and the same labels as in $\Gamma$ except between vertices $u\in U$ and $v\in V$ where one sets
$$m'(u,v):=m(u,\Delta v\Delta^{-1}).$$
\end{dfn}

It is not difficult to prove that the resulting Coxeter system $S'$ generates a group isomorphic to the initial Coxeter group \cite[Theorem 4.5]{BradyEtal}. Furthermore, since there is a bijection between the finite parabolic subgroups of $S$ and $S'$, Steinberg's formula (Theorem \ref{Steinberg}) implies that the two Coxeter systems have the same growth serie and in particular the same exponentional growth rate. 

In our situation, we will take $V=\{s_0,s_2\}$ for $s_0,s_2$ with $m(s_0,s_2)=2k+1$. Conjugation by the longest element in the dihedral group $W_V$ of order $2(2k+1)$ will simply exchange $s_0$ and $s_2$. In what follows we will adopt the convenient convention to omit edges labelled by $\infty$ from the Coxeter diagram $\Gamma$ (but keep all $2$ labels). Define
$$\mathrm{Lk}_2(s_0,s_2):=\{s\in S\setminus \{s_0,s_2\}\mid m(s,s_0)=m(s,s_2)=2\}$$
and take for $U$ the union of the connected components of $S\setminus (\{s_0,s_2\}\cup \mathrm{Lk}_2(s_0,s_2))$ which are connected to $s_2$ by at least one edge. The pair $(U,\{s_0,s_2\})$ satisfies the condition for diagram twisting of Definition \ref{diag twist}. Indeed, let $s\in S\setminus (U\amalg \{s_0,s_2\})$ and $u\in U$ such that $m(s,u)<\infty$. If $s\in \mathrm{Lk}_2(s_0,s_2)$ we already know that $m(s,s_0)=m(s,s_2)=2$. Suppose $s\notin  \mathrm{Lk}_2(s_0,s_2)$ then $s\in S\setminus \{s_0,s_2\}\cup \mathrm{Lk}_2(s_0,s_2)$. If $m(s,s_2)<\infty$ then $s$ belongs to $U$ by construction, which is excluded. Finally, if $m(s,s_0)<\infty$, then we obtain a path from $s_0$ to $s_2$ by first taking the edge from $s_0$ to $s_2$, then the edge from $s_2$ to $u$, and then a path connecting $u$ to $s_2$ which exists by the construction of $U$. But his contradicts \cite[Lemma 4]{Mihalik}, which claims that any path from $s_0$ to $s_2$ either intersects $\mathrm{Lk}_2(s_0,s_2) $ or contains the edge from $s_0$ to $s_2$, neither of which hold here. 

Finally it remains to establish that the diagram $\Gamma'$ obtained by twising $U$ along $\Delta$ has the desired property that $m(s,s_2)\in \{2,\infty\}$ for any $s\in S\setminus \{s_0,s_2\}$. There is nothing to prove for $s\in \mathrm{Lk}_2(s_0,s_2)$. If $s\in U$ then the label $m'(s,s_2)$ is the label $m(s,s_0)$, which cannot be finite otherwise this would create a path from $s_0$ to $s\in U$ to $s_2$ contradicting \cite[Lemma 4]{Mihalik} once again. Finally if $s$ is neither in $ \mathrm{Lk}_2(s_0,s_2)$ nor $U$ then a finite label $m(s,s_2)$ would contradict the definition of $U$. 

Diagram twisting does not affect the exponential growth rate. 

\subsection*{Second step: Pseudo-transposition} With the diagram twisting we know that our Coxeter diagram now takes the form 

\begin{figure}[H]%                 use [hb] only if necceccary!
  \centering
  \includegraphics[width=6cm]{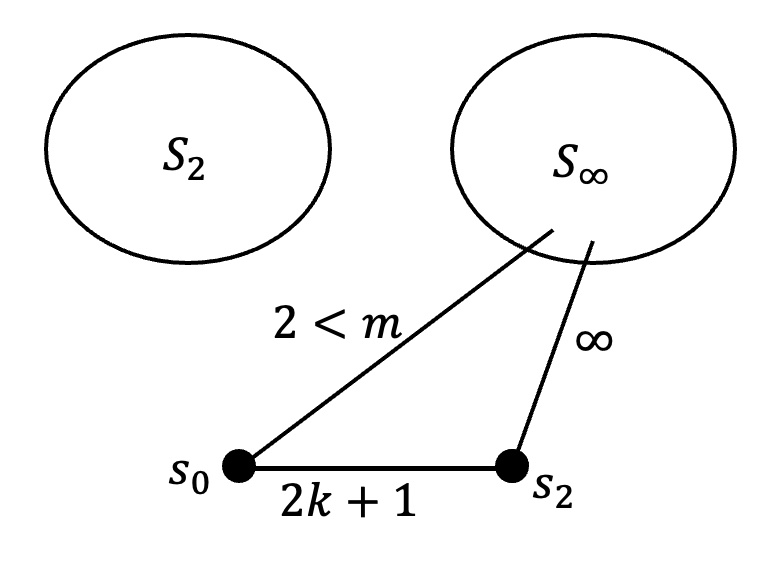}
 % \caption*{TestPicture}
%  \label{fig:test}
\end{figure}
\noindent where $S_2=\{s\in S\setminus\{s_0,s_2\}\mid m(s,s_2)=2\}$ and $S_\infty=\{s\in S\setminus\{s_0,s_2\}\mid m(s,s_2)=\infty\}$. We can now invoke  \cite[Proposition 5]{Mihalik} which allows us  to conclude that there exists $r\in S$ such that 
\begin{enumerate}
\item $r\in \mathrm{Lk}_2(s_0,s_2)$ or equivalently $m(r,s_0)=m(r,s_2)=2$,
\item if $m(r,s)<\infty$ for $s\in S\setminus \{s_0,s_2\}$ then $m(r,s)=2$ and $s\in S_2$,
\item any maximal parabolic containing $x$ and $y$ contains $r$. 
\end{enumerate}
In the last item a maximal parabolic is a finite subgroup of $W$ generated by a subset of $S$. This conclusion is stated more generally in \cite[Proposition 5]{Mihalik} for arbitrary parabolics (called there simplices) under an additional condition of being conjugated to a parabolic of the unique even Coxeter diagram. This condition is void for maximal parabolics. The second condition implies that $m(r,s)=\infty$ for every $s\in S_\infty$. It remains to show that $m(r,s)=2$ for every $s\in S_2$: If $s\in S_2$, then $x,y,s$ belong to a unique maximal parabolic, which by item 3) must contain $r$. In particular the subgroup generated by $s$ and $r$ is finite, so that $m(r,s)<\infty$ and hence $m(r,s)=2$ by item 2). It follows that our diagram precisely has the shape described on the right hand side of the illustration in Section \ref{Section Comparison of growth} and thus can be obtained by the pseudo-transposition procedure described in Section \ref{Section Comparison of growth}. 

As proven in Theorem \ref{replacement decrease}, this procedure decreases the exponential growth rate. 

\bibliographystyle{alpha}

\end{document}